\documentclass{article}
\usepackage{amsmath}
\usepackage{amsfonts}
\usepackage{mathtools}
\usepackage{amssymb}
\usepackage{graphicx} 
\usepackage{xcolor}
\usepackage{amsthm}
\usepackage{enumitem}
\usepackage{CJKutf8}
\usepackage{algorithm2e}
\usepackage{comment}

\theoremstyle{plain}
\newtheorem{theorem}{Theorem}[section]

\newtheorem{lemma}[theorem]{Lemma}
\newtheorem{proposition}[theorem]{Proposition}

\theoremstyle{definition}

\newtheorem{conjecture}[theorem]{Conjecture}
\theoremstyle{remark}

\newcommand{\cyan}[1]{{\color{blue}#1}}
\newcommand{\red}[1]{{\color{red}#1}}

\title{
A Homogeneous Nullstellensatz for Joint Invariant Subspaces }
\author{Sizhuo Yan, Jianting Yang and Lihong Zhi}
\date{February  2026}

\begin{document}

\maketitle
\begin{abstract}
Jurij Vol\v{c}i\v{c} conjectured that a noncommutative polynomial $g$ belongs to the unital $\mathbb{K}$-algebra generated by finitely many noncommutative polynomials if and only if, for matrices of every size, every joint invariant subspace of the evaluations of the generators is also invariant under the evaluation of $g$. In this paper, we establish a homogeneous Nullstellensatz for joint invariant subspaces by proving that this equivalence holds whenever the generators are homogeneous. In contrast, we demonstrate that the statement fails in the general case, thereby settling the conjecture completely.
\end{abstract}

\section{Introduction}

Hilbert’s Nullstellensatz is a foundational result in commutative algebraic geometry, making it natural to seek extensions in the context of noncommutative polynomials. A key distinction between the commutative and noncommutative cases is that zeros of noncommutative polynomials admit several inequivalent definitions. This paper focuses on Jurij  Vol\v{c}i\v{c}'s conjecture on a Nullstellensatz for noncommutative polynomials formulated in terms of joint invariant subspaces.

Throughout this paper, $\mathbb{N}$ denotes the set of natural numbers and $\mathbb{K}$ denotes an algebraically closed field of characteristic zero. 
We use $\underline{x} = (x_1, x_2, \ldots, x_d)$ to represent a tuple of noncommutative variables,  where $d$ is a positive integer. 
Let $\mathbb{K}\langle \underline{x}\rangle$ denote the free $\mathbb{K}$-algebra generated by $x_1, \ldots, x_d$; equivalently, $\mathbb{K}\langle \underline{x}\rangle$ is the algebra of noncommutative polynomials in $d$ variables over $\mathbb{K}$. The noncommutative polynomial in $\mathbb{K}\langle \underline{x}\rangle$ has the following form
\[f=\sum_{i=1}^{N}\alpha_{i} \omega_i,~ \text{where} ~\omega_i\in \langle \underline{x} \rangle,~\alpha_i\in \mathbb{K},~N\in \mathbb{N}.\]
The notation $\langle \underline{x} \rangle$ is the set of words generated by $x_1, x_2, \ldots, x_d$. The degree of a word $\omega$ is its length, which is denoted by  $\mathrm{deg}(\omega)$.  The degree of a noncommutative polynomial $f$ is the  maximum length of a word appearing in $f$, we  denote it by $\deg(f)$. 
Let $\underline{X} = (X_1, X_2,\ldots, X_d) \in \mathrm{M_n}(\mathbb{K})^d$ denote a $d$-tuple of $n \times n$ matrices. The evaluation of a noncommutative polynomial $f \in \mathbb{K}\langle \underline{x} \rangle$ at $\underline{X}$ is denoted by $f(\underline{X})$. 

In \cite{10.1007/978-3-031-75326-8_21}, when noncommutative polynomials are evaluated at matrices, the author summarized different types of zeros, namely true zeros, determinantal zeros, directional zeros,  weak zeros and trical zeros. The author then also introduced the Nullstellensatze corresponding to zeros defined in different ways \cite{amitsur1957generalization,MR4366019,helton2007strong,helton2004positivstellensatz,klep2014tracial,klep2017null,salomon2018algebras}. Moreover, free *-algebra with involution and its *-representations also define analogous zeros and admit the corresponding real Nullstellensatze \cite{CHMSN2013,MR4366019,KSV2018,MR432612,Schmudgen2009}. 
It is worth recalling two Nullstellensatze that are closely related to the results of this paper. 
\paragraph{True zero set}
The true zero set of a  noncommutative polynomial $g$ is defined as the collection of all tuples $\underline{X} = (X_1, \ldots, X_d)\in \mathrm{M_n}(\mathbb{K})^d$ for some $n$,  such that $g(\underline{X}) = 0$. Similarly, the common true zero set of noncommutative polynomials $f_1, \ldots, f_l$ consists of tuples $X $ that satisfy $f_j(\underline{X}) = 0$ for all $j = 1, \ldots, l$. Formally, these sets are denoted by the following:
\[Z(g):=\bigcup_{n \in \mathbb{N}}\left\{
X \in \mathrm{M_n}(\mathbb{K})^d\mid
g(\underline{X}) = 0 \right\}.\]
\[Z(f_1, \ldots, f_l):=\bigcup_{n \in \mathbb{N}}\left\{X  \in \mathrm{M_n}(\mathbb{K})^d  \mid f_j(\underline{X})= 0 \text{ for all } j = 1, \ldots, l\right\}.\]
The Nullstellensatz corresponding to true zeros under the
additional assumption that $f_1, \ldots, f_l$ are homogeneous is as follows:
\begin{theorem}[Salomon-Shalit-Shamovich]\label{Truezero}
\cite{salomon2018algebras} For $f_1,\ldots,f_l,\, g \in \mathbb{K}\langle \underline{x}\rangle$, suppose $f_1,\ldots,f_l$ be homogeneous, the following are equivalent:
\begin{enumerate}
\item[(i)] 
$Z(f_1, \ldots, f_l)  \subseteq Z(g)$;
\item[(ii)] the polynomial $g$ belongs to the two-sided ideal generated by polynomials $f_1,\ldots,f_l$.
\end{enumerate}    
\end{theorem}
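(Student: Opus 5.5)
The implication (ii)$\Rightarrow$(i) is immediate, so the work is in (i)$\Rightarrow$(ii). Write $g=\sum_{\omega}\alpha_\omega\,\omega$. Evaluation is an algebra homomorphism, so if $g=\sum_k a_k f_{j_k} b_k$ and $f_j(\underline{X})=0$ for all $j$, then $g(\underline{X})=\sum_k a_k(\underline{X})\,f_{j_k}(\underline{X})\,b_k(\underline{X})=0$.

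For the converse I use that $I=(f_1,\dots,f_l)$ is a homogeneous two-sided ideal. Hence the quotient $A=\mathbb{K}\langle \underline{x}\rangle/I$ is graded, $A=\bigoplus_{m\ge 0}A_m$, and each $A_m$ is finite dimensional because there are finitely many words of each degree. I will use the following finite-dimensional truncations. Fix $N\ge \deg(g)$ and set
\[
J_N=I+\mathrm{span}\{\text{words of degree}>N\},
\]
a two-sided ideal with $\mathbb{K}\langle \underline{x}\rangle/J_N\cong A_{\le N}=\bigoplus_{m\le N}A_m$. This quotient is a finite-dimensional algebra, of dimension $n$ say.

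Let the variables act on $V=A_{\le N}$ by left multiplication, $X_i(v)=x_i v$, taken modulo $J_N$. This gives matrices $\underline{X}\in \mathrm{M_n}(\mathbb{K})^d$ with $p(\underline{X})v=pv \bmod J_N$ for every polynomial $p$.

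First, $\underline{X}\in Z(f_1,\dots,f_l)$. Indeed $f_j v\in I\subseteq J_N$, so $f_j(\underline{X})=0$. By (i), $g(\underline{X})=0$. Applying this to the class of $1$ gives $g\in J_N$. Since $J_N$ is homogeneous and $\deg g\le N$, the components of $g$ lie in $I$ degreewise, so $g\in I$.

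The key point, and the only one needing care, is the last step: checking that $J_N\cap \mathbb{K}\langle \underline{x}\rangle_{\le N}=I_{\le N}$. This uses homogeneity of $I$. Since $I=\bigoplus_m I_m$, an element of $J_N$ of degree at most $N$ has its low-degree components in $I$, because the added words only affect degrees above $N$.

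This is exactly where homogeneity of the $f_j$ is needed. For nonhomogeneous generators, $\mathbb{K}\langle \underline{x}\rangle/I$ may have no nontrivial finite-dimensional representations, and then the truncation trick fails. No results from the paper beyond the definitions are required.
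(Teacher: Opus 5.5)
Your proof is correct and complete. It also never uses that $\mathbb{K}$ is algebraically closed or of characteristic zero.

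The paper does not prove this statement; it quotes it from Salomon--Shalit--Shamovich. Your argument is, however, the same truncation mechanism the paper uses for Theorem~\ref{thm:homo-case}. There the variables act by the creation operators on $\mathcal{F}_m$, i.e.\ by left multiplication on $\mathbb{K}\langle\underline{x}\rangle$ modulo words of degree $>m$. One evaluates at the vacuum $\mathbf{1}$, and homogeneity is used to discard all terms of degree $>m$.

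The one difference is that you pass to the further quotient by $I=(f_1,\ldots,f_l)$. This is forced for true zeros, because the bare tuple $\underline{L}$ does not annihilate the $f_j$. For invariant subspaces no such quotient is needed: the cyclic subspace generated by $\mathbf{1}$ under $f_1(\underline{L}),\ldots,f_l(\underline{L})$ already encodes the subalgebra.

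Your quotient representation is the algebraic form of compressing $\underline{L}$ to the co-invariant subspace $\mathcal{F}_N\ominus(I\cap\mathcal{F}_N)$. The compressed version additionally yields a row contraction, which matters in the nc-ball setting of the original reference but not for the statement as given here.

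The degenerate case $1\in I$ gives a zero-dimensional quotient. Then $J_N$ is the whole algebra and $g\in I$ trivially, so nothing breaks.
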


\paragraph{Directional zero set}
The set of directional zeros of a  noncommutative polynomial $g$ is defined as the collection of all pairs $(\underline{X}, v)$, where $\underline{X} = (X_1, \ldots, X_d)$ is a $d$-tuple of $n \times n$ matrices over $\mathbb{K}$ for some $n$, and $v \in \mathbb{K}^n$ is a vector such that $g(\underline{X})v = 0$. Similarly, the common directional zero set of noncommutative polynomials $f_1, \ldots, f_l$ consists of a pair $(\underline{X}, v)$ that satisfies $f_j(\underline{X})v = 0$ for all $j = 1, \ldots, l$.  These sets are denoted by the following:
\[Z^{\mathrm{dir}}(g):=\bigcup_{n \in \mathbb{N}}\left\{
(\underline{X}, v) \in \mathrm{M_n}(\mathbb{K})^d \times \mathbb{K}^n\mid
g(\underline{X})\, v = 0 \right\}.\]
\[Z^{\mathrm{dir}}(f_1, \ldots, f_l):=\bigcup_{n \in \mathbb{N}}\left\{(\underline{X}, v) \in \mathrm{M_n}(\mathbb{K})^d \times \mathbb{K}^n\mid f_j(\underline{X})\, v = 0 \text{ for all } j = 1, \ldots, l\right\}.\]

The Nullstellensatz corresponding to directional zeros is as follows:
\begin{theorem}[Bergman]
\cite{helton2007strong,helton2004positivstellensatz}
For $f_1,\ldots,f_l,\, g \in \mathbb{K}\langle \underline{x}\rangle$, the following are equivalent:
\begin{enumerate}
\item[(i)]$Z^{\mathrm{dir}}(f_1, \ldots, f_l)  \subseteq Z^{\mathrm{dir}}(g)$;
\item[(ii)] the polynomial $g$ belongs to the left ideal generated by polynomials $f_1,\ldots,f_l$.
\end{enumerate}
\end{theorem}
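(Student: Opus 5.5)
The implication (ii)$\Rightarrow$(i) is immediate. If $g=\sum_j h_j f_j$ with $h_j\in\mathbb{K}\langle \underline{x}\rangle$, then evaluation is an algebra homomorphism, so $g(\underline{X})=\sum_j h_j(\underline{X})f_j(\underline{X})$. Hence $f_j(\underline{X})v=0$ for all $j$ forces $g(\underline{X})v=0$.

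For (i)$\Rightarrow$(ii) I argue by contraposition, using a single "generic" finite-dimensional pair built from the left module $\mathbb{K}\langle \underline{x}\rangle/L$, where $L$ is the left ideal generated by $f_1,\ldots,f_l$. Suppose $g\notin L$. Let $D=\max(\deg g,\deg f_1,\ldots,\deg f_l)$ and choose a truncation degree $N\ge D$ (say $N=2D$ to leave room).

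Let $V_N$ be the space of polynomials of degree at most $N$, let $L_N=L\cap V_N$, and set $W=V_N/L_N$, which is finite-dimensional. Write $v$ for the class of $1$ in $W$. Define $X_i$ on $W$ as follows: on the class of a word $w$ of degree less than $N$, send it to the class of $x_i w$ (left multiplication, since the ideal is a left ideal); on the part of degree $N$, extend arbitrarily, e.g. by $0$ on a chosen complement.

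Two facts need checking. First, $X_i$ is well defined on the lower-degree part. This holds because $L$ is a left ideal: if $p\in L$ with $\deg p<N$, then $x_ip\in L_N$. One must choose a graded complement carefully so that the truncation does not break this. Second, for any polynomial $h$ of degree at most $N$, $h(\underline{X})v=[h]$. This follows by induction on the words, since applying $X_{i_1}\cdots X_{i_k}$ to $[1]$ yields $[x_{i_1}\cdots x_{i_k}]$ while the degree stays at most $N$.

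With these facts, $f_j(\underline{X})v=[f_j]=0$ for every $j$, so $(\underline{X},v)\in Z^{\mathrm{dir}}(f_1,\ldots,f_l)$. On the other hand $g(\underline{X})v=[g]$, and this is nonzero exactly when $g\notin L_N$, i.e. when $g\notin L$, since $\deg g\le N$. Thus $(\underline{X},v)\notin Z^{\mathrm{dir}}(g)$, contradicting (i).

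The main obstacle is the well-definedness of $X_i$ on the quotient at the top degree. An element of $L_N$ may be written as $\sum_j h_jf_j$ with $\deg h_j$ large and cancellation among the top terms. So $L_N$ need not be spanned by products $h_jf_j$ of degree at most $N$, and $W$ is not simply "graded". I would handle this by defining $X_i$ only on the image of $V_{N-1}$ in $W$ and checking compatibility there: if $p\in V_{N-1}\cap L$, then $x_ip\in V_N\cap L$. That is all the compatibility needed, and it holds regardless of how $p$ is represented. I would then extend $X_i$ linearly to all of $W$ by picking any complement of the image of $V_{N-1}$. Linear-algebra bookkeeping shows the induction in the second fact only ever applies $X_i$ to classes of degree below $N$, which closes the argument.
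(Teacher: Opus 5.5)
Your proof is correct and complete. The paper does not prove this theorem (it only cites it), so there is no in-paper proof to match against. The closest analogue is the paper's truncated Fock space proof of Theorem~\ref{thm:homo-case}: your $V_N$ with left multiplication by $x_i$ is $\mathcal{F}_N$ with the creation operators, and $[1]$ plays the role of $\mathbf{1}$.

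What you add is the passage to the quotient by $L_N=L\cap V_N$. This kills every $f_j$ at the cyclic vector while keeping $[g]\neq 0$. The creation operators themselves do not descend to this quotient. For example, with $f=x_1+x_2^2$ and $N=2$, your $L_2$ is $\mathbb{K}f$, yet the truncated creation operator $L_{x_1}$ sends $f$ to $x_1^2\notin\mathbb{K}f$. So your fix is exactly what is needed: define $X_i$ only on the image of $V_{N-1}$, where $x_i(L\cap V_{N-1})\subseteq L\cap V_N$ because $L$ is a left ideal, and extend arbitrarily on a complement. It suffices because evaluating a word of length $\le N$ at $[1]$ only ever applies some $X_i$ to a class coming from $V_{N-1}$.

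The paper must use homogeneity to control the truncation error modulo $I_m$; you need none, since a left ideal is a left submodule. As a by-product you get a witness of size at most $\dim V_D=\sum_{k=0}^{D}d^k$. This contrasts with the subalgebra setting of Section~3, where the analogous equivalence fails.

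Minor points:
\begin{itemize}
\item $N=D$ already suffices.
\item No ``graded'' complement is needed; any complement of the image of $V_{N-1}$ works, as you conclude.
\item You can skip the well-definedness check altogether. View $V_N/L_N$ as a subspace of the left module $\mathbb{K}\langle\underline{x}\rangle/L$ (the natural map is injective since $V_N\cap L=L_N$). Then set $X_i=P\,M_{x_i}|_{V_N/L_N}$, where $M_{x_i}$ is left multiplication on the module and $P$ is any linear projection onto $V_N/L_N$.
\item Nothing in your argument uses that $\mathbb{K}$ is algebraically closed or of characteristic zero.
\end{itemize}
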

In \cite{10.1007/978-3-031-75326-8_21}, the author explained that condition (i) in the above theorem can be interpreted as the condition that all  joint kernel spaces of matricial values of $f_1,\ldots,f_l$ are the kernel spaces of matricial values of the polynomial $g$, and then proposed the following conjecture by extending kernel spaces to invariant spaces. 
\begin{conjecture}\label{conj:main}\cite[Conjecture~3.2]{10.1007/978-3-031-75326-8_21}
For $f_1,\ldots,f_l,\, g \in \mathbb{K}\langle \underline{x}\rangle$, the following conditions are equivalent:
\begin{enumerate}
\item[(i)] for all $n \in \mathbb{N}$ and $\underline{X} \in \mathrm{M_n}(\mathbb{K})^d$, joint invariant subspaces for
$f_1(\underline{X}),\ldots,f_l(\underline{X})$ are invariant for $g(\underline{X})$;
\item[(ii)] $g$ belongs to the unital $\mathbb{K}$-algebra generated by $f_1,\ldots,f_l$.
\end{enumerate}
\end{conjecture}

 In~\cite{10.1007/978-3-031-75326-8_21}, the conjecture was verified for $l = 1$, and can be stated as the following proposition.
\begin{proposition}
    \cite[Proposition~3.3]{10.1007/978-3-031-75326-8_21}
Let $f, g \in \mathbb{K}\langle \underline{x} \rangle$. Then $g \in \mathbb{K}[f]$ if and only if for all $n \in \mathbb{N}$ and $\underline{X} \in M_n(\mathbb{K})^d$, the eigenvectors of $f(\underline{X})$ are the eigenvectors of $g(\underline{X})$.
\end{proposition}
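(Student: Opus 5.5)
The forward implication is immediate. If $g=p(f)$ with $p\in\mathbb{K}[t]$ and $f(\underline{X})v=\lambda v$, then $g(\underline{X})v=p(\lambda)v$. For the converse, the plan is to turn the eigenvector condition, one eigenvalue at a time, into left-ideal membership via Bergman's Nullstellensatz, and then finish with a degree argument in the free algebra. That algebra is a domain with $\deg(ab)=\deg a+\deg b$.

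\emph{Trivial case.} If $f=c$ is constant, every vector is an eigenvector of $f(\underline{X})$. So $g(\underline{X})$ is a scalar matrix for every $\underline{X}$. Evaluating at $1\times1$ scalars and at $2\times 2$ matrices then forces $g\in\mathbb{K}$. From now on assume $\deg f\ge1$.

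\emph{Step 1: a single scalar for each eigenvalue.} Fix $\lambda\in\mathbb{K}$. Since $\mathbb{K}$ is algebraically closed and $f$ is nonconstant, there are scalar points $a\in\mathbb{K}^d$ with $f(a)=\lambda$, so $\lambda$ occurs as an eigenvalue.

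For any eigenpair $(\underline{X},v)$ of $f$ with eigenvalue $\lambda$, the hypothesis gives $g(\underline{X})v=\mu v$ for some scalar $\mu$. This $\mu$ does not depend on the pair. Given two such pairs $(\underline{X},v)$ and $(\underline{Y},w)$, the vector $v\oplus w$ is a $\lambda$-eigenvector of $f(\underline{X}\oplus\underline{Y})$. It must therefore be an eigenvector of $g(\underline{X}\oplus\underline{Y})$, which forces the two scalars to agree.

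Call this common scalar $\mu_\lambda$. Then $Z^{\mathrm{dir}}(f-\lambda)\subseteq Z^{\mathrm{dir}}(g-\mu_\lambda)$, and Bergman's theorem gives
\[ g-\mu_\lambda = h_\lambda\,(f-\lambda)\quad\text{for some } h_\lambda\in\mathbb{K}\langle\underline{x}\rangle,\ \text{for every }\lambda\in\mathbb{K}.\]

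\emph{Step 2: descent on degree.} We prove the stronger claim: if for infinitely many $\lambda$ there are $\mu_\lambda\in\mathbb{K}$ with $g-\mu_\lambda\in\mathbb{K}\langle\underline{x}\rangle(f-\lambda)$, then $g\in\mathbb{K}[f]$. The proof is by induction on $\deg g$.

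Fix one such $\lambda_0$ and write $g=h(f-\lambda_0)+\mu_0$. If $h\neq 0$, then $\deg h=\deg g-\deg f<\deg g$. If $\deg g<\deg f$, additivity of degree forces $h=0$ and $g=\mu_0$ is constant, which settles the base case.

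For any other admissible $\lambda$ we have $h(f-\lambda_0)=h(f-\lambda)+(\lambda-\lambda_0)h$. Comparing with $g=h_\lambda(f-\lambda)+\mu_\lambda$ gives
\[(\lambda-\lambda_0)\,h=(h_\lambda-h)(f-\lambda)+(\mu_\lambda-\mu_0).\]
Dividing by $\lambda-\lambda_0\neq0$ shows that $h$ again satisfies the hypothesis for infinitely many $\lambda$, with scalars $(\mu_\lambda-\mu_0)/(\lambda-\lambda_0)$. By induction $h=q(f)$ for some $q\in\mathbb{K}[t]$, hence $g=q(f)(f-\lambda_0)+\mu_0\in\mathbb{K}[f]$.

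\emph{Main obstacle.} The delicate point is Step 1. Bergman's theorem needs a single polynomial $g-\mu$, but the eigenvalue of $g(\underline{X})$ a priori depends on $(\underline{X},v)$. The direct-sum trick resolves this by making $\mu_\lambda$ depend on $\lambda$ alone. One must also make sure that $\lambda$ actually occurs as an eigenvalue, which is where the scalar points $a\in\mathbb{K}^d$ are used. Step 2 is routine once we observe that the quotient $h$ inherits the same property for all but one $\lambda$. This is why the claim is phrased with ``infinitely many $\lambda$'' rather than ``all $\lambda$''.
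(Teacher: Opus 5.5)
The paper does not prove this proposition. It only quotes it from Vol\v{c}i\v{c}'s survey, so there is no in-paper argument to compare yours with.

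Judged on its own, your main line is sound. The direct-sum trick correctly makes the eigenvalue of $g$ depend only on $\lambda$, and Bergman's theorem then gives $g-\mu_\lambda\in\mathbb{K}\langle\underline{x}\rangle(f-\lambda)$. The descent in Step 2 is also correct: degree additivity gives $\deg h=\deg g-\deg f$, and $h$ inherits the property for every $\lambda\neq\lambda_0$. However, two auxiliary claims are false as written and need repair.

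\emph{First, Step 1.} It is not true that a nonconstant $f$ attains every $\lambda$ at scalar points. For $d\ge 2$, $f=x_1x_2-x_2x_1$ vanishes on all of $\mathbb{K}^d$. Fortunately you do not need $\lambda$ to occur as an eigenvalue at all, so the ``main obstacle'' you identify is not an obstacle. If no $f(\underline{X})$ has eigenvalue $\lambda$, then $Z^{\mathrm{dir}}(f-\lambda)$ consists only of pairs $(\underline{X},0)$. The inclusion $Z^{\mathrm{dir}}(f-\lambda)\subseteq Z^{\mathrm{dir}}(g-\mu)$ then holds for any $\mu$, and Bergman's theorem still applies. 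Bergman's theorem even shows every $\lambda$ does occur: otherwise $Z^{\mathrm{dir}}(f-\lambda)\subseteq Z^{\mathrm{dir}}(1)$, forcing $1\in\mathbb{K}\langle\underline{x}\rangle(f-\lambda)$, which is impossible by degree.

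\emph{Second, the constant case.} Evaluations at $1\times 1$ and $2\times 2$ matrices do not force $g\in\mathbb{K}$. For example, $g=(x_1x_2-x_2x_1)^2$ is $0$ on $\mathrm{M}_1(\mathbb{K})^d$. By Cayley--Hamilton for traceless $2\times 2$ matrices, it is scalar on $\mathrm{M}_2(\mathbb{K})^d$. You must use all sizes, and your own machinery does this:
\begin{itemize}
\item Run Step 1 with $\lambda=c$, so $f-c=0$ and every nonzero vector is an eigenvector.
\item The direct-sum trick then gives one $\mu$ with $g(\underline{X})=\mu I$ for all $\underline{X}$.
\item Bergman's theorem yields $g-\mu\in\mathbb{K}\langle\underline{x}\rangle\cdot 0=\{0\}$. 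Alternatively, evaluate at the creation operators on $\mathcal{F}_m$ with $m\ge\deg g$, where $(g-\mu)(\underline{L})\mathbf{1}=g-\mu$.
\end{itemize}

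With these two fixes your argument is a complete proof of the proposition.
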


Although the conjecture is true for  $l=1$, it remains open for  $l>1$, and this paper addresses this challenging case.

\subsection*{Our Contributions}


 Inspired by the proof of Theorem~\ref{Truezero}, we observe that the homogeneity of the generators can simplify the algebraic structure by restricting the degrees. Hence, we 
establish an equivalence between the geometric condition of invariant subspace inclusion and the algebraic condition of $g$ belonging to the subalgebra generated by homogeneous $f_1,\ldots,f_l$, i.e., Conjecture~\ref{conj:main} is true under the  assumption that $f_1,\dots,f_l$  are homogeneous.

\begin{theorem}[Homogeneous Nullstellensatz for Joint Invariant Subspaces] \label{thm:homo-case}
Let $f_1,\ldots,f_l, g \in \mathbb{K}\langle \underline{x}\rangle$, where $f_1,\ldots,f_l$ are \textbf{homogeneous}. Then the following are equivalent:
\begin{enumerate}
\item[(i)] for all $n \in \mathbb{N}$ and $\underline{X} \in \mathrm{M}_n(\mathbb{K})^d$, every joint invariant subspace of
$f_1(\underline{X}),\ldots,f_l(\underline{X})$ is invariant under $g(\underline{X})$;
\item[(ii)] $g$ belongs to the unital $\mathbb{K}$-algebra generated by $f_1,\ldots,f_l$.
\end{enumerate}
\end{theorem}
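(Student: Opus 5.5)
For (ii)$\Rightarrow$(i) there is nothing to do: if $g$ is a polynomial expression in $1,f_1,\ldots,f_l$, then $g(\underline{X})$ is the same expression in $I,f_1(\underline{X}),\ldots,f_l(\underline{X})$. Every joint invariant subspace of the $f_j(\underline{X})$ is invariant under sums and products of these operators, hence under $g(\underline{X})$. The whole content is therefore (i)$\Rightarrow$(ii), and I plan to prove it by evaluating at a single "universal" finite-dimensional nilpotent tuple and reading off membership in the subalgebra.

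Let $A\subseteq\mathbb{K}\langle\underline{x}\rangle$ be the unital subalgebra generated by $f_1,\ldots,f_l$, and fix $N>\deg(g)$. Set $V=\mathbb{K}\langle\underline{x}\rangle/J_N$, where $J_N$ is the two-sided ideal spanned by all words of length $\ge N$. Then $V$ is finite dimensional, with basis the words of length $<N$; let $n=\dim V$. Let $X_i\in\mathrm{M}_n(\mathbb{K})$ be the matrix of left multiplication by $x_i$ on $V$. For every $p\in\mathbb{K}\langle\underline{x}\rangle$, the matrix $p(\underline{X})$ acts on $V$ as left multiplication by $p$, so $p(\underline{X})[1]=[p]$. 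Now take $W=\pi(A)\subseteq V$, where $\pi$ is the quotient map. Since $f_jA\subseteq A$, we get $f_j(\underline{X})W=\pi(f_jA)\subseteq W$, so $W$ is a joint invariant subspace of $f_1(\underline{X}),\ldots,f_l(\underline{X})$. By (i), $W$ is invariant under $g(\underline{X})$. Because $1\in A$, this gives $[g]=g(\underline{X})[1]\in W$. Hence there is $a\in A$ with $g-a\in J_N$.

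The remaining step, and the only place homogeneity enters, is to upgrade "$g\equiv a \bmod J_N$" to "$g\in A$". Since each $f_j$ is homogeneous, every product $f_{j_1}\cdots f_{j_r}$ is homogeneous, and $A$ is spanned by such products together with $1$. Thus $A$ is a graded subspace: every homogeneous component of an element of $A$ again lies in $A$. Degree-$0$ generators are just scalars and cause no trouble. Write $a=\sum_k a_k$ with $a_k$ homogeneous of degree $k$, so each $a_k\in A$. Since $J_N$ is exactly the span of the homogeneous components of degree $\ge N$ and $\deg g<N$, comparing components of $g-a\in J_N$ in degrees $<N$ gives $g=\sum_{k<N}a_k\in A$.

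I expect the main obstacle to be precisely this passage from the truncated quotient back to the free algebra. Without homogeneity, $\pi(A)$ can contain $[g]$ only because some high-degree tail of an element of $A$ is discarded by the truncation, and there is no reason for $g$ itself to lie in $A$. This is consistent with the paper's claim that the general conjecture fails. The gradedness of $A$ is what makes truncation compatible with membership, and I would take care to state that compatibility cleanly.
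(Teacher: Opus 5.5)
Your proposal is correct and follows essentially the paper's route. Your quotient $\mathbb{K}\langle\underline{x}\rangle/J_N$ with left multiplications is the paper's truncated Fock space $\mathcal{F}_{N-1}$ with left creation operators, and your $W=\pi(A)$ is exactly the paper's cyclic subspace $\mathcal{V}_\infty$ generated by $\mathbf{1}$ (describing it directly as $\pi(A)$ just lets you skip the chain-stabilization lemma); after that, both arguments use homogeneity to discard the high-degree terms and conclude $g\in A$ once the truncation order exceeds $\deg g$.
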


We next address Conjecture~\ref{conj:main} without assuming the homogeneity of $f_1,\ldots,f_l$. In particular, we show that Theorem~\ref{thm:homo-case} does not extend to the non-homogeneous generators. 

\begin{theorem}\label{thm:general-case}

   There exist non-homogeneous polynomials $f_1,\ldots,f_l, g \in \mathbb{K}\langle \underline{x}\rangle$ with $l \ge 2$ such that
for all $n \in \mathbb{N}$ and $\underline{X} \in \mathrm{M}_n(\mathbb{K})^d$, every joint invariant subspace of
$f_1(\underline{X}),\ldots,f_l(\underline{X})$ is invariant under $g(\underline{X})$,
while
$g$ does not belong to the unital $\mathbb{K}$-algebra generated by $f_1,\ldots,f_l$.
Consequently, Conjecture~\ref{conj:main} is not valid in general.
\end{theorem}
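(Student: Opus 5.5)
The plan is to exhibit an explicit counterexample. The first step is to reformulate condition (i) pointwise. For fixed $\underline{X}\in \mathrm{M}_n(\mathbb{K})^d$, let $A_{\underline{X}}\subseteq \mathrm{M}_n(\mathbb{K})$ be the unital algebra generated by $f_1(\underline{X}),\dots,f_l(\underline{X})$. Then (i) says exactly that $g(\underline{X})\in \mathrm{AlgLat}(A_{\underline{X}})$ for every $\underline{X}$.

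A sufficient, and much easier to check, condition is \emph{pointwise membership}: $g(\underline{X})\in A_{\underline{X}}$ for every $\underline{X}$. In that case $g(\underline{X})$ is some polynomial in $f_1(\underline{X}),\dots,f_l(\underline{X})$ whose coefficients are allowed to depend on $\underline{X}$. This dependence is precisely what is invisible to the free subalgebra $R=\mathbb{K}\langle f_1,\dots,f_l\rangle$. So I will look for $f_j$ and $g$ with $g(\underline{X})\in A_{\underline{X}}$ for all $\underline{X}$ but $g\notin R$.

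The mechanism I intend to exploit is Cayley--Hamilton. If $f_1(\underline{X})$ is invertible, then $f_1(\underline{X})^{-1}$ is a polynomial in $f_1(\underline{X})$ whose coefficients depend on $\underline{X}$. Hence, for instance, $x_2$ is recovered pointwise from $x_1$ and $x_1x_2$ whenever $X_1$ is invertible. Homogeneity is exactly what prevents such tricks from working everywhere: by the scaling $\underline{X}\mapsto t\underline{X}$ one can always reach the non-invertible locus, and there the homogeneous result, Theorem~\ref{thm:homo-case}, forces actual membership in $R$. So the counterexample must use non-homogeneous shifts.

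Concretely, I would take $d=2$, $f_1=x_1$, and two generators of the form $p(x_1)\,x_2\,q(x_1)$ and $p'(x_1)\,x_2\,q'(x_1)$. The shifts $p,p',q,q'$ are to be chosen with disjoint zero sets, so that on each generalized eigenspace of $X_1$ at least one of them is invertible. The spectral projections of $X_1$ are polynomials in $X_1$, hence lie in $A_{\underline{X}}$. Cutting down by these projections and inverting the relevant shift on each block, I can then express $X_2$ pointwise as an element of $A_{\underline{X}}$, so that $g=x_2$ satisfies (i).

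The choice of shifts must also block the obvious global B\'ezout identity. The simplest choice, $x_1^2x_2$ and $(1-x_1)^2x_2$, fails: both coefficients sit on the left, they are coprime in $\mathbb{K}[x_1]$, and a B\'ezout combination puts $x_2$ in $R$. I will therefore place the coefficients on \emph{opposite} sides, for example $x_1x_2$ and $x_2(1-x_1)$. Then any element of $R$ that is linear in $x_2$ lies in $x_1\mathbb{K}[x_1]\,x_2\,\mathbb{K}[x_1]+\mathbb{K}[x_1]\,x_2\,(1-x_1)\mathbb{K}[x_1]$, and a B\'ezout identity is no longer available.

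Non-membership $g\notin R$ is then verified by a degree or normal-form argument. One looks at the part of $R$ that is linear in $x_2$, viewed inside $\mathbb{K}[x_1]\otimes\mathbb{K}[x_1]$ via $a(x_1)x_2b(x_1)\mapsto a\otimes b$. One must show that $1\otimes 1$ does not lie in the corresponding subspace. This needs some care, because products of the quadratic generators can re-enter the linear part only through terms in $x_1$, so it should reduce to a check in $\mathbb{K}[s,t]$: whether $1$ lies in the ideal-like span generated by $s$ and $(1-t)$ under the allowed operations.

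I expect the main obstacle to be the tension between the two requirements. Pointwise membership must hold for \emph{all} $\underline{X}$, including the case where $X_1$ has both eigenvalues $0$ and $1$ with nontrivial Jordan blocks, which is where off-diagonal blocks of $X_2$ between these generalized eigenspaces must be recovered. At the same time the subalgebra must genuinely miss $g$. If the first naive choice fails on the mixed Jordan case, I will first try squaring the shifts, or adding a third generator $x_2$-sandwich with shifts supported on the complementary eigenvalue pairs, and recheck both properties.
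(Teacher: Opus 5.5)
Your construction fails, and the failure already shows in your concrete example. Take $X_1=\mathrm{diag}(0,1)$ and $X_2=E_{12}$, so $X_2e_2=e_1$. Then $X_1X_2=0$ and $X_2(I-X_1)=0$. The joint invariant subspaces of $f_1(\underline X),f_2(\underline X),f_3(\underline X)$ are therefore just those of $\mathrm{diag}(0,1)$. The subspace $\mathrm{span}\{e_2\}$ is one of them, but $X_2e_2=e_1\notin\mathrm{span}\{e_2\}$, so condition (i) fails for $g=x_2$. This happens on the mixed block you flagged, and it needs no Jordan blocks at all.

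The obstruction is structural, so squaring shifts or adding further sandwiches cannot help. Consider any generating set $\{x_1\}\cup\{a_i(x_1)x_2b_i(x_1)\}$. All generators are homogeneous in $x_2$-degree, so $R$ is graded. Under $a(x_1)x_2b(x_1)\mapsto a(s)b(t)$, your normal-form computation identifies the degree-one part $R_1$ exactly with the ideal $J=(a_i(s)b_i(t))_i\subseteq\mathbb{K}[s,t]$. (There is no ``re-entry'' from products, because of the grading.) Hence $x_2\in R$ if and only if $1\in J$. If $1\notin J$, the commutative Nullstellensatz gives a common zero $(\lambda,\mu)$. With $X_1=\mathrm{diag}(\lambda,\mu)$ and $X_2=E_{12}$, every sandwich then evaluates to $a_i(\lambda)b_i(\mu)E_{12}=0$. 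So $\mathrm{span}\{e_2\}$ is jointly invariant but not $X_2$-invariant, and within your family (i) and (ii) are equivalent.

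Your heuristic that disjoint zero sets make some shift invertible on each generalized eigenspace only controls single eigenvalues. Recovering the block $P_\lambda X_2P_\mu$ needs $a_i(\lambda)b_i(\mu)\neq 0$ for some $i$, for every pair $(\lambda,\mu)$. That is exactly the two-variable B\'ezout identity $1\in J$. Putting coefficients on opposite sides changes the ideal but does not escape B\'ezout.

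The paper exhibits no explicit example; it argues non-constructively via a computability gap. Membership in the subalgebra generated by $f_1,\dots,f_l$ is recursively enumerable: enumerate words in the $f_j$ and solve linear systems. If (i)$\Leftrightarrow$(ii) held in general, non-membership would also be recursively enumerable. For each fixed $(n,r)$, the existence of $\underline X\in\mathrm{M}_n(\mathbb{C})^d$ and a rank-$r$ matrix $V$ whose column space is invariant under all $f_j(\underline X)$ but not under $g(\underline X)$ is a first-order sentence over $\mathbb{R}$, decidable by Tarski--Seidenberg. Running both searches in parallel would decide subalgebra membership, contradicting Umirbaev's undecidability theorem. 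The restrictions $l\ge 2$ and non-homogeneity then follow from Theorem~\ref{thm:homo-case} and the $l=1$ result. An explicit counterexample would have to break the graded-ideal structure above, and your proposal has no such candidate. The missing idea is this decidability argument.
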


Rather than constructing an explicit counterexample to Conjecture~\ref{conj:main}, our conclusion follows from a computability gap: the subalgebra membership problem is undecidable in general (Theorem~\ref{thm:undecidable}), whereas the condition on invariant subspaces is decidable via the first-order theory of real closed fields (Theorem~\ref{thm:Tarski–Seidenberg}).  

By contrast, when the generators $f_1,\ldots,f_l$ are homogeneous, the corresponding subalgebra membership problem is known to be decidable~\cite{MR1805178}, which is consistent with the validity of the homogeneous Nullstellensatz for joint invariant subspaces established in this work.



\section{Homogeneous Nullstellensatz for Joint Invariant Subspaces }\label{sec2}

In this section, we prove that Conjecture~\ref{conj:main} holds under the assumption that the generators $f_1, \ldots, f_l$ are homogeneous. Our proof relies on constructions in the truncated Fock space.

Let $x_1,\ldots,x_d$ be $d$ noncommutative variables. 
The noncommutative truncated Fock space of order $m$ is a finite-dimensional $\mathbb{K}$-inner product space
\[
\mathcal{F}_m:= \mathrm{span}\big\{ \omega \mid \omega\in\langle\underline{x}\rangle,~\mathrm{deg}(\omega)\le m \big\},
\]
with $\big\{ \omega \mid \omega\in\langle\underline{x}\rangle,~ \mathrm{deg}(\omega)\le m \big\}$ being an orthonormal basis of~$\mathcal{F}_m$.

For each variable $x_k$, the left creation operator $L_{x_k}$ on Hilbert space $\mathcal{F}_m$ is defined by
\[
L_{x_k} ( \omega):=
\begin{cases}
x_k\omega, & \text{if } {\mathrm{deg}(\omega) < m},\\[2mm]
0, & \text{if } {\mathrm{deg}(\omega) = m}.
\end{cases}
\]

Let $\mathbb{K}\langle \underline{x}\rangle$ be the  $\mathbb{K}$-algebra of noncommutative polynomials. 
For a fixed positive integer $m$, let $I_m$ be the two-sided ideal   
\[
I_m := \Big\langle  \omega\mid \omega\in\langle\underline{x}\rangle,\; \operatorname{deg}(\omega) > m\Big\rangle,
\]
Then the algebra $\mathbb{K}\langle \underline{x}\rangle / I_m$ consists of elements which have the following form.
\[\hat f=\sum_{i=1}^{N} \alpha_{i} \omega_i + I_m ~,\; \text{where }\operatorname{deg}(\omega_i) \le m.\]

Let $V$ be a vector space over the field $\mathbb{K}$, and let $\underline{T} = (T_1,\ldots,T_l)$ be a tuple of linear operators on $V$. Let $\underline{y}=\{y_1,y_2,\ldots,y_l\}$ be a tuple of noncommutative variables, and $\langle \underline{y} \rangle$ be the set of words generated by $y_1,y_2,\ldots,y_l$. Let $u$ be an element in $\langle \underline{y} \rangle$. For a fixed vector $v \in V$ and $k \ge 0$, let
\begin{equation*}\label{vk}
    \mathcal{V}_k:= \mathrm{span} \bigl\{ u(\underline{T})v \mid u\in\langle\underline{y}\rangle,~ \text{deg} (u) \le k \bigr\}.
\end{equation*}
It follows that there is an ascending chain of subspaces
\begin{equation*}\label{ass}
\mathcal{V}_0 \subseteq \mathcal{V}_1 \subseteq \cdots \subseteq \mathcal{V}_k \subseteq \cdots
.
\end{equation*}
For such an ascending chain of subspaces, it follows that

\begin{lemma}\label{lem:invarint space}
For a finite-dimensional linear space $V$, the union 
\[
\mathcal{V}_\infty := \bigcup_{k \ge 0} \mathcal{V}_k
\]
is the smallest joint invariant subspace containing $v$ under the operators in $\underline{T}$. Furthermore, there exists an integer $a$ such that
\[
\mathcal{V}_a = \mathcal{V}_{a+1} = \mathcal{V}_{a+2} = \cdots=\mathcal{V}_\infty \subseteq V,
\]
and
\[
a \le \dim V.
\]
\end{lemma}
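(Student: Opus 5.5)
The plan has three parts: show that $\mathcal{V}_\infty$ is a joint invariant subspace containing $v$, show that it is the smallest one, and then bound the index at which the chain stabilizes.

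\emph{Invariance.} First, $\mathcal{V}_\infty$ is a subspace. Any two elements of it lie in some $\mathcal{V}_k$ and $\mathcal{V}_{k'}$, hence both lie in $\mathcal{V}_{\max(k,k')}$ because the chain is ascending, so their linear combinations lie in $\mathcal{V}_\infty$. Also $v=u(\underline{T})v$ for the empty word $u$, so $v\in\mathcal{V}_0$. Next take $w\in\mathcal{V}_k$ and write $w=\sum_i \alpha_i u_i(\underline{T})v$ with $\deg(u_i)\le k$. For each $j$ we get $T_j w=\sum_i \alpha_i (y_ju_i)(\underline{T})v\in\mathcal{V}_{k+1}$. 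The key inclusion is therefore $T_j\mathcal{V}_k\subseteq\mathcal{V}_{k+1}$, and it shows that $\mathcal{V}_\infty$ is jointly invariant.

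\emph{Minimality.} Let $W$ be any joint invariant subspace with $v\in W$. By induction on $\deg(u)$ we get $u(\underline{T})v\in W$ for every word $u$: write $u=y_ju'$, so $u(\underline{T})v=T_j\,u'(\underline{T})v\in T_jW\subseteq W$. It follows that every $\mathcal{V}_k\subseteq W$, and hence $\mathcal{V}_\infty\subseteq W$.

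\emph{Stabilization.} The main point is that once the chain stalls for a single step, it is constant from then on. Suppose $\mathcal{V}_a=\mathcal{V}_{a+1}$. Then
\[
\mathcal{V}_{a+2}=\mathcal{V}_{a+1}+\sum_j T_j\mathcal{V}_{a+1}=\mathcal{V}_{a+1}+\sum_j T_j\mathcal{V}_{a}\subseteq\mathcal{V}_{a+1}.
\]
The first equality needs a short justification: $\mathcal{V}_{k+1}=\mathcal{V}_k+\sum_j T_j\mathcal{V}_k$, since every word of degree $k+1$ has the form $y_ju'$ with $\deg(u')=k$. Induction then gives $\mathcal{V}_a=\mathcal{V}_{a+i}$ for all $i\ge 0$, so $\mathcal{V}_a=\mathcal{V}_\infty$. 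Now let $a$ be the first index with $\mathcal{V}_a=\mathcal{V}_{a+1}$. Such an index exists because $\dim V<\infty$. Before $a$, every inclusion $\mathcal{V}_k\subsetneq\mathcal{V}_{k+1}$ is strict and so raises the dimension by at least one. Since $\dim\mathcal{V}_0\ge 0$, this gives $a\le\dim\mathcal{V}_a\le\dim V$. If $v\neq0$ the same count yields the sharper bound $a\le\dim V-1$.

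The argument is routine and has no real obstacle. The only delicate step is the recursion $\mathcal{V}_{k+1}=\mathcal{V}_k+\sum_jT_j\mathcal{V}_k$, which is what lets a single equality $\mathcal{V}_a=\mathcal{V}_{a+1}$ propagate along the whole chain.
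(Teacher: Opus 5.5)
Your proof is correct and has the same structure as the paper's: invariance from $T_j\mathcal{V}_k\subseteq\mathcal{V}_{k+1}$, minimality by showing every $u(\underline{T})v$ lies in any invariant $W\ni v$, and stabilization by counting dimensions. The one difference works in your favor. The paper only notes that strict inclusions raise the dimension, which gives eventual stabilization but does not by itself rule out a stall followed by further growth, so it does not yet give $a\le\dim V$. Your recursion $\mathcal{V}_{k+1}=\mathcal{V}_k+\sum_j T_j\mathcal{V}_k$ shows that a single stall persists forever, which is exactly the missing step needed for that bound.
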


\begin{proof}

 
 The subspace  $\mathcal{V}_\infty$ is an invariant subspace since
  \[T_j \mathcal{V}_\infty =\bigcup_{k \ge 0} T_j\mathcal{V}_k\subseteq \bigcup_{k \ge 1} \mathcal{V}_k\subseteq \mathcal{V}_\infty,~\text{for all}~j=1\ldots l.\]
  If there exists an invariant subspace $W$ such that
  \[v\in W\subsetneq \mathcal{V}_\infty,\]
  then 
  \[u(\underline{T})v\in W,~\text{for all } u\in\langle\underline{y}\rangle. \]
  Therefore
  \[\mathcal{V}_\infty \subseteq W.\]
 Hence,  the subspace $\mathcal{V}_\infty$ is the smallest joint invariant subspace containing $v$ under the operators in $\underline{T}$.
  
    For $i\in\mathbb{N}$, if
  \[\mathcal{V}_i \subsetneq \mathcal{V}_{i+1}, \]
  then 
  \[\mathrm{dim}(\mathcal{V}_i)<\mathrm{dim}(\mathcal{V}_{i+1}).\]
  Since the space $V$ is a finite-dimensional linear space, there exists an integer $a$ such that
\[
\mathcal{V}_a = \mathcal{V}_{a+1} = \mathcal{V}_{a+2} = \cdots=\mathcal{V}_\infty \subseteq V,
\]
and
\[
a \le \dim V.
\]
\end{proof}
Now we are ready for our main result for  noncommutative homogeneous polynomials.

\begin{proof}[proof of Theorem~\ref{thm:homo-case}]
$\ $ 

(ii) $\implies$ (i) is trivial.

(i) $\implies$ (ii).
Let $\mathcal{F}_m$ be the truncated Fock space of order $m$ with $d$ noncommutative variables $x_1,\dots,x_d$, 
and let $\underline{L} = (L_{x_1},\ldots,L_{x_d})$ be the corresponding left creation operators. 

Consider the tuple of operators 
\[
F(\underline{L}) = (f_1(\underline{L}),\ldots,f_l(\underline{L})).
\]
Let vector  $v =\mathbf{1} \in \mathcal{F}_m$ (to avoid ambiguity, let $\mathbf{1}$ be the vector corresponding to the constant polynomial $1\in \mathbb{K}\langle \underline{x} \rangle$).

The ascending chain of subspaces generated by $v=\mathbf{1}$ and $F(\underline{L})$ is defined by:
\[\mathcal{V}_0 := \mathrm{span}\{v\},\; k=0,\] 
\[\mathcal{V}_k: = \mathrm{span}\{u(F(\underline{L})) v \mid u\in\langle\underline{y}\rangle,~\deg(u)  \le k\},\; k \ge 1.
\]

  Since $\mathcal{F}_m$ is finite-dimensional, Lemma~\ref{lem:invarint space} implies that
there exists a positive integer $a \le \dim \mathcal{F}_m$ 
such that $\mathcal{V}_a$ is the smallest joint invariant subspace of the operators $f_1(\underline{L}),\ldots,f_l(\underline{L})$ containing the vector $\mathbf{1}$. 
From the definition of the left creation operator \(L_{x_i}\) and of the space \(\mathcal{F}_m\), we have
\[u(F(\underline{L})) \mathbf{1} =0,~\text{when } u\in\langle\underline{y}\rangle,~\deg(u)  > m.\]
We conclude that \(a \le m\).


If condition (i) holds, then the joint invariant space of $f_1(\underline{L}),\ldots,f_l(\underline{L})$ is also invariant under $g(\underline{L})$. Hence, we have
\[g(L)\mathbf{1}\in\mathcal{V}_m. \]
Therefore, there exists a noncommutative polynomial $h_m \in \mathbb{K}\langle\underline{y}\rangle$ of degree at most $m$ such that 
\[h_m=\sum_{i=1}^{N_m} \alpha_{i}u_i ~\text{, where } u_i \in \langle \underline{y}\rangle ,~N_m\in \mathbb{N} \]
and
\[g=h_m(f_1,\ldots,f_l) \text{ mod } I_m.\]

Since $f_1,\ldots,f_l$ are noncommutative homogeneous polynomials, the noncommutative polynomial $u_i(f_1,\ldots,f_l)$ is also homogeneous.
For every $u_i$ that appears in $h_m$, define 
\[\beta_i:=
\begin{cases}
    \alpha_i, ~~\text{deg}(u_i(f_1,\ldots,f_l))\le m\\
    0,~~~~\text{deg}(u_i(f_1,\ldots,f_l))> m,
\end{cases}\]
\[h_m^{'}=\sum_{i=1}^{N_m} \beta_{i}u_i,\]
then $\operatorname{deg}(h_m^{'}) \leq m$.
\[g\equiv h_m^{'}(f_1,\ldots,f_l)\equiv h_m(f_1,\ldots,f_l) \text{ mod } I_m.\]
Since $m$ is arbitrary, let $m$ be an integer such that $m \ge \operatorname{deg}(g)$, then,
\[g=h_m^{'}(f_1,\ldots,f_l).\]
\end{proof}

\section{Non-homogeneous Case  } 

In this section, we demonstrate that the homogeneous Nullstellensatz for joint invariant subspace (Theorem~\ref{thm:homo-case}) cannot be generalized to the non-homogeneous generators. Consequently, Conjecture~\ref{conj:main} is not valid in general. Our proof is based on results on {undecidability} and {decidability} in computational  theory.

The first result shows the undecidability of the membership problem for finitely generated subalgebras of free associative algebras.

\begin{theorem}\cite[Corollary]{MR1286789}\label{thm:undecidable}
For any fixed field, the membership problem for finitely generated subalgebras of free associative algebras of rank at least $2$ is \textbf{undecidable}.
\end{theorem}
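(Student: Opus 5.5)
Theorem~\ref{thm:undecidable} is imported from the literature and is not proved in this paper. If we had to supply a proof, the plan would be a many-one reduction from a problem already known to be undecidable to subalgebra membership. We would carry it out in two stages: first in a free associative algebra of some finite rank $r$, and then transported to rank $2$.

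\textbf{Reduction to rank $2$.} This stage is the routine one. The assignment $x_i\mapsto x\,y^{i}$ for $i=1,\dots,r$ defines a homomorphism $\varphi\colon\mathbb{K}\langle x_1,\dots,x_r\rangle\to\mathbb{K}\langle x,y\rangle$. It is injective, because the words $xy^{i}$ form a code: a product of them can be uniquely parsed by reading off the occurrences of $x$. For a finitely generated subalgebra $S=\mathbb{K}\langle s_1,\dots,s_p\rangle$ we have $\varphi(S)=\mathbb{K}\langle \varphi(s_1),\dots,\varphi(s_p)\rangle$, and injectivity gives
\[
g\in S \iff \varphi(g)\in\varphi(S).
\]
The map $(g,s_1,\dots,s_p)\mapsto(\varphi(g),\varphi(s_1),\dots,\varphi(s_p))$ is computable. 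Hence undecidability in some finite rank implies undecidability in rank $2$, and also in every rank $\ge 2$, since $\mathbb{K}\langle x,y\rangle$ sits inside any free algebra of higher rank as a subalgebra generated by variables.

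\textbf{Source of undecidability.} We would take a finitely presented semigroup $\langle z_1,\dots,z_k\mid u_j=w_j\rangle$ with undecidable word problem, for instance Tseitin's. Its word problem is equivalent to membership in the two-sided ideal $J=(u_j-w_j)$ of $\mathbb{K}\langle \underline z\rangle$. So it suffices to give a computable encoding of two-sided ideal membership into subalgebra membership.

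\textbf{Main obstacle.} The crux is the encoding itself. Subalgebras are not closed under multiplication by arbitrary elements, so the ideal must be simulated using marker variables. Our first attempt would be as follows. Adjoin markers $t,s$ and consider the subalgebra
\[
S=\mathbb{K}\bigl\langle\, t z_i,\ z_i s,\ t(u_j-w_j)s \,\bigr\rangle .
\]
The aim is to prove $f\in J$ if and only if $t f s\in S$, possibly after passing to a suitable homogeneous component in $t,s$ and adjusting the generators so that the marker counts are rigid. The forward direction is a direct expansion. The converse requires a normal-form or leading-word argument, using a SAGBI-type basis for $S$ or a combinatorial analysis of how marker letters can occur, to show that no unintended cancellations occur.

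If the naive generators allow spurious products, the fallback is a Mikhailova-style doubling. One works in two copies $z_i,z_i'$ of the variables and uses generators that force paired occurrences. This mirrors the group-theoretic proof that subgroup membership in $F_2\times F_2$ is undecidable, and it is essentially the route of \cite{MR1286789}. Verifying this converse direction is where we expect essentially all the difficulty to lie.
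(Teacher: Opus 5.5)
The paper gives no proof of this statement. It is quoted from \cite{MR1286789} and used as a black box in the proof of Theorem~\ref{thm:general-case}, so your sketch has to stand on its own, and it does not. Two parts are fine. The rank-$2$ transfer via $x_i\mapsto xy^{i}$ is correct: the words $xy^{i}$ form a code, so $\varphi$ is injective and $g\in S\iff\varphi(g)\in\varphi(S)$. Passing from Tseitin's semigroup to two-sided ideal membership is standard. The one nontrivial step, encoding ideal membership as subalgebra membership, is missing, and the candidate you write down fails in both directions, not just the converse.

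Every generator of $S=\mathbb{K}\langle tz_i,\ z_is,\ t(u_j-w_j)s\rangle$ is bihomogeneous in $(t,s)$, so $S$ is bigraded. A product of generators of bidegree $(1,1)$ is either a single $t(u_j-w_j)s$, or one $tz_i$ and one $z_ks$ in either order; the order $z_ks\,tz_i$ cannot contribute to $tfs$. Hence
\[
tfs\in S\iff f\in\operatorname{span}\{\,u_j-w_j,\ z_iz_k\,\}.
\]
For large $N$ (and $u_1\neq w_1$), the element $z_1^N(u_1-w_1)\in J$ lies outside this finite-dimensional span. So the forward direction is not a ``direct expansion''; it is false. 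Conversely, $tz_1z_2s=(tz_1)(z_2s)\in S$, yet $z_1z_2\notin J$: $J$ is spanned by differences of words equal in the semigroup, so it contains no single word.

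Tuning will not fix this. All your generators have positive degree in the markers, so the marker-degree-$k$ part of $S$ is spanned by the finitely many products of at most $k$ generators. Membership in $S$ is therefore decidable by finite linear algebra. This is the same finiteness that makes membership decidable for homogeneous generators (cf.\ \cite{MR1805178}). So ``adjusting the generators so that the marker counts are rigid'' pushes you toward decidability, not away from it. Any encoding of an undecidable problem needs generators of marker degree $0$ (or non-homogeneous ones) whose products of unbounded length land in a fixed degree. Ruling out unintended cancellations among those products is exactly the part you defer.

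The Mikhailova fallback is only named, and it cannot be a direct transcription. Mikhailova's argument uses the commuting coordinates of $F_2\times F_2$. In a free associative algebra, commuting elements are algebraically dependent (Bergman's centralizer theorem), so $\mathbb{K}\langle\underline z\rangle\otimes\mathbb{K}\langle\underline z'\rangle$ does not embed, and the two copies $z_i,z_i'$ interleave rather than pair.

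What you have is a correct reduction to rank $2$ wrapped around an unproved, and as written incorrect, core. Supplying that core is the actual content of \cite{MR1286789}, which is why the paper cites it rather than proving it.
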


The second result is about the decidability of the first-order theory of real closed fields (cf.  \cite{MR63994} and \cite[Theorem~16]{MR1634189} ), where formulas in  first-order theory are constructed from polynomial equalities and inequalities about real variables, logical connectives (i.e., $\land$, $\lor$, $\lnot$ ) and quantifiers (i.e., $\exists$, $\forall$ ). 

\begin{theorem}[Tarski–Seidenberg]\label{thm:Tarski–Seidenberg}
   The first-order theory of real closed fields is \textbf{decidable}.
\end{theorem}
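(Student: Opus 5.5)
The plan is to derive decidability from \emph{effective quantifier elimination}. I would show that for every first-order formula $\Phi(\underline{p})$ in the language of ordered fields $(+,\cdot,0,1,<)$ one can compute a quantifier-free formula $\Psi(\underline{p})$ such that $\Phi \leftrightarrow \Psi$ holds in every real closed field. Applied to a sentence, $\Psi$ has no free variables. It is then a Boolean combination of sign conditions on integer constants, which can be evaluated mechanically. Since this answer does not depend on which real closed field we use, the theory is complete and decidable.

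First I reduce to a single existential quantifier. We have $\forall x\,\phi \equiv \lnot\exists x\,\lnot\phi$, and elimination can proceed from the innermost quantifier outward. So it suffices to treat $\exists x\,\phi(x,\underline{p})$ with $\phi$ quantifier-free. Writing $\phi$ in disjunctive normal form and distributing $\exists$ over $\lor$ reduces further to
\[
\exists x \;\bigwedge_{j} \operatorname{sign} P_j(x,\underline{p}) = \sigma_j ,\qquad \sigma_j\in\{-1,0,1\},
\]
where the $P_j \in \mathbb{Z}[\underline{p}][x]$ are finitely many polynomials.

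The core step is to eliminate $x$ from such a system of sign conditions, uniformly in the parameters $\underline{p}$. For fixed parameters, the Sturm--Tarski theorem computes, for $P$ and $Q$,
\[
\#\{x : P(x)=0,\ Q(x)>0\} - \#\{x : P(x)=0,\ Q(x)<0\}
\]
as a difference of sign variations of the signed remainder sequence of $P$ and $P'Q$, taken at $\pm\infty$. Those signs are the signs of leading coefficients. Multiplying the $P_j$ by products of powers of one another and solving a linear system with an invertible tensor-power matrix, one gets from these Tarski queries the number of real roots of $\prod_j P_j$ (or of a suitable derivative of it) that realize each sign vector. The open intervals between roots, and the unbounded intervals, are handled by adding the derivative of the product and the behaviour at $\pm\infty$, which is again given by signs of leading coefficients.

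With parameters, the remainder sequence depends on which leading coefficients vanish. I therefore split into finitely many cases indexed by the vanishing pattern of the coefficients, each a quantifier-free condition on $\underline{p}$. In each case I use subresultant coefficients, which are polynomials in $\underline{p}$, so the sign-variation counts become Boolean combinations of sign conditions on polynomials in $\underline{p}$. Every step is an explicit algorithm, which gives the effectivity needed for decidability. The only property of the field used is that polynomials satisfy the intermediate value property, which is exactly what holds in real closed fields.

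I expect the main obstacle to be this uniform, parametric version of the Sturm--Tarski count. The difficulty is the case analysis on degree drops in the remainder sequence, together with the proof that subresultants give the correct signs in every branch. I would handle it by defining the signed subresultant sequence directly from determinants of Sylvester-type matrices, so that its coefficients are polynomial in $\underline{p}$. I would then prove the structure theorem relating it to the remainder sequence, which makes the counting formula valid regardless of which coefficients vanish.
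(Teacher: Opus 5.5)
The paper gives no proof of this statement. It quotes the theorem from the literature and uses it as a black box in Lemma~\ref{lem:NonMemebr-term}. Your proposal is a correct outline of the standard algorithmic proof by effective quantifier elimination. Its ingredients are:
\begin{itemize}
\item Sturm--Tarski queries;
\item Ben-Or--Kozen--Reif sign determination, via the invertible matrix $M^{\otimes s}$ indexed by exponents in $\{0,1,2\}^s$;
\item Rolle's theorem, which follows from the intermediate value property, to sample the open intervals at roots of the derivative of the product;
\item signed subresultants, to make the counts uniform in the parameters.
\end{itemize}
You correctly identify the subresultant structure theorem as the real work. It replaces the explicit case analysis on degree drops of the remainder sequence in Tarski's original treatment.

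Two routine points should be tightened. First, the signs at $-\infty$ depend on the parity of the degrees as well as on the leading coefficients. Second, the subresultant formula for the Cauchy index is normally stated for $\deg Q<\deg P$. So you should first replace $P'Q$ by the remainder of $\mathrm{lc}(P)^{2k}P'Q$ modulo $P$, with $2k\ge\deg(P'Q)-\deg P+1$. This remainder has coefficients polynomial in the parameters and the same Cauchy index with respect to $P$.

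Compared with the bare citation, your route makes explicit two things the paper actually relies on.
\begin{itemize}
\item You obtain completeness of the theory, not just decidability. Completeness, more than decidability as literally stated, is what allows the paper to decide the existential formula of Lemma~\ref{lem:NonMemebr-term} in the specific field $\mathbb{R}$.
\item Your procedure handles only sentences with integer coefficients. So in that lemma, $f_1,\ldots,f_l,g$ should be taken with rational coefficients. This is harmless for the undecidability argument: for rational inputs, membership at each degree bound is a rational linear system, which is solvable over $\mathbb{C}$ exactly when it is solvable over $\mathbb{Q}$.
\end{itemize}
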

Now we are ready to discuss Theorem~\ref{thm:general-case} and Conjecture~\ref{conj:main}. Our proof is by contradiction. Assuming that the 
homogeneous Nullstellensatz for joint invariant subspaces (Theorem~\ref{thm:homo-case}) can be generalized to the case of non-homogeneous generators, then there exists an algorithm that solves the subalgebra membership problem in finite time, which contradicts Theorem~\ref{thm:undecidable}. We now complete the proof of Theorem~\ref{thm:general-case}, and thereby give a negative answer to Conjecture~\ref{conj:main}.

First, we present a procedure which is guaranteed to terminate if $g$ is in the subalgebra generated by $\{f_1, \ldots, f_l\}$. The term “procedure” is used rather than “algorithm” because it may not terminate on all inputs.

\begin{algorithm}[H]

\caption{Membership Verification}\label{alg:g-in-KF}
\SetKwProg{Procedure}{Procedure}{}{}
\Procedure{$\textsc{membership}(g,f_1,\ldots,f_l)$}{
    Initialize $n \leftarrow 0$\;
    \Repeat{termination condition met}{
        $n \leftarrow n + 1$\;
        Compute $\langle \underline{y} \rangle_n=\{u\in  \langle \underline{y}\rangle\mid \operatorname{deg}(u)\leq n \}$\;
      Check whether the linear equation $g=\sum_{ u \in \langle \underline{y} \rangle_n}c_{u}u(f)$ has a solution\;
        \If{there exists a solution  for the linear equation}{
            \Return{\textbf{Membership certified}}\;}
        }
    }

\end{algorithm}

Here, $\langle \underline{y} \rangle_n$ is the set of all  words generated by $\underline{y}=(y_1,\ldots,y_l)$ with degree at most $n$, and for any word $u \in \langle \underline{y}\rangle$,  $u(f)=u(f_1,\ldots,f_l)$ is obtained by replacing $y_1,\ldots,y_l$ in $u$ with $f_1,\ldots,f_l$.
Regarding Procedure~\ref{alg:g-in-KF}, we have the following lemma.

\begin{lemma}\label{lem:Memebr-term}

If $g$ belongs to the unital $\mathbb{K}$-algebra generated by $f_1,\ldots,f_l$, then Procedure~\ref{alg:g-in-KF} terminates in finite time on input  $(g,f_1,\ldots,f_l)$.
\end{lemma}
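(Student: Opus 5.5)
The plan is to unpack what membership in the unital $\mathbb{K}$-algebra generated by $f_1,\ldots,f_l$ means, and then check two things: each iteration of Procedure~\ref{alg:g-in-KF} is a finite computation, and the loop reaches the needed degree bound.

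First, the unital $\mathbb{K}$-algebra generated by $f_1,\ldots,f_l$ is exactly the set of finite $\mathbb{K}$-linear combinations
\[
\sum_{u\in\langle\underline{y}\rangle} c_u\, u(f_1,\ldots,f_l).
\]
Here the empty word contributes the unit $1$. This set contains $1$ and each $f_j$, and it is closed under addition, scalar multiplication, and multiplication, because $u(f)\,u'(f)=(uu')(f)$. It is also contained in every unital subalgebra containing the $f_j$. So if $g$ is a member, there are finitely many words $u_1,\ldots,u_N$ and scalars $c_1,\ldots,c_N$ with $g=\sum_i c_i u_i(f)$. Put $n_0:=\max(1,\max_i \deg(u_i))$. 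Then all $u_i$ lie in $\langle\underline{y}\rangle_{n_0}$, and for every $n\ge n_0$ the linear equation $g=\sum_{u\in\langle\underline{y}\rangle_n} c_u u(f)$ has a solution. Take $c_u=c_i$ when $u=u_i$ and $c_u=0$ otherwise, adding up coefficients if some words repeat.

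Second, each iteration must finish in finite time. For fixed $n$, the set $\langle\underline{y}\rangle_n$ is finite, with $\sum_{k=0}^n l^k$ elements, and can be listed explicitly. Each $u(f)$ is a noncommutative polynomial obtained by finitely many multiplications, and all the $u(f)$ together with $g$ lie in the finite-dimensional space spanned by the words in $x$ of degree at most $\max(\deg g,\ n\max_j\deg f_j)$. Comparing coefficients word by word turns the equation into a finite linear system over the subfield generated by the coefficients of $g$ and the $f_j$. Its solvability is decided by Gaussian elimination, equivalently by comparing the rank of the coefficient matrix with the rank of the augmented matrix.

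The loop increases $n$ by one each time, so it reaches $n_0$ after $n_0$ iterations, finds a solution, and returns. This gives termination in finite time.

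The only point needing a little care is the computational model: the arithmetic must be exact. I would handle this by assuming, as is implicit in the membership problem of Theorem~\ref{thm:undecidable}, that the inputs have coefficients in a computable subfield such as $\mathbb{Q}$ or a finitely generated extension of it. Gaussian elimination stays inside that subfield, and solvability over the subfield is equivalent to solvability over $\mathbb{K}$, since rank does not change under field extension. Beyond this the argument is routine.
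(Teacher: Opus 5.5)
Your proof is correct and follows the same route as the paper: write $g=\sum_i \alpha_i u_i(f)$, take the maximal word degree, and note that the linear system is solvable once the loop reaches that degree. Your extra remarks make explicit several points the paper leaves implicit: that each iteration is a finite Gaussian-elimination computation over a computable subfield, and the use of $\max(1,\cdot)$ to cover the case where $g$ is a constant.
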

\begin{proof}
    It is clear that if $g$ belongs to the unital  $\mathbb{K}$-algebra generated by $f_1,\ldots,f_l$, then there exists a noncommutative polynomial $h \in \mathbb{K}\langle \underline{y}\rangle$ with 
    $ g=h(f_1,\ldots,f_l)$. Let $h$ be expressed as a finite linear combination of words, i.e.,
    \[h(\underline{y})=\sum_{i=1 }^N\ \alpha_{i}u_i(\underline{y}), \]
    with $c_{i} \in \mathbb{K}$ for all $i=1,2,\ldots,N$. Let $n'$ be the maximal value of 
    $\{\operatorname{deg}(u_i):i=1,2,\ldots,N\}$, then $ u_i\in \langle \underline{y} \rangle_{n'}$ for $i=1,2,\ldots,N$. By $ g=h(f_1,\ldots,f_l)$, we have
    \[g=\sum_{i=1 }^N \alpha_{i}u_i(f), \]
    which implies that the linear equation $g=\sum_{ u\in  \langle \underline{y} \rangle_{n'}}c_{u}u(f)$  at the $n'$-th loop of Procedure~\ref{alg:g-in-KF} admits a solution.
\end{proof}
The above conclusion implies the existence of a deterministic procedure that terminates  when $g$ belongs to the unital $\mathbb{K}$-algebra generated by $f_1,\ldots,f_l$. In conclusion, the set of instances of the subalgebra membership problem belongs to the complexity class \textsf{RE} (recursively enumerable).

It is clear that if there exist matrices $\underline{X} \in \mathrm{M_n}(\mathbb{K})^d$ and a joint invariant subspace  $V \subseteq \mathbb{K}^n$ for $f_1({\underline{X}}),\ldots,f_l({\underline{X}})$ such that $V$ is not an invariant subspace for $g(\underline{X})$, then $g$ does not belong to the unital $\mathbb{K}$-algebra generated by $f_1,\dots,f_l$. 
Based on this observation, a procedure is given to certify non-membership of the subalgebra over the field of complex numbers $\mathbb{C}$.

\begin{algorithm}[H]
\caption{Non-Membership Verification}\label{alg:g-notin-H}
\SetKwProg{Procedure}{Procedure}{}{}
\Procedure{$\textsc{Non-membership}(g,f_1,\ldots,f_l)$}{
    Initialize $n \leftarrow 0$\;
    \Repeat{termination condition met}{
        $n \leftarrow n + 1$\;
        \For{$r \leftarrow 1$ \KwTo $n$}{
            Check the existence of matrices $\underline{X} \in  \mathrm{M_{n}}(\mathbb{C})^d$ and $V \in  \mathrm{M_{n\times r}}(\mathbb{C})$ satisfying:\\
            \Indp
            1. $\operatorname{rank}(V)=r$\;
            2. The column space of $V$ is an invariant subspace for $f_j(\underline{X})$ for all $j=1,2,\ldots,l$\;
            3. The column space of $V$ is \textbf{not} an invariant subspace for $g(\underline{X})$\;
            \Indm
            \If{such matrices exist}{
                \Return{\textbf{Non-membership certified}}\;
            }
        }
    }
}
\end{algorithm}

The above procedure searches for \textit{counterexamples} across dimensions of the matrices and subspaces. Specifically, for each    pair of integers $n \geq 1$ and $1 \leq r \leq n$, it attempts to find matrices $X_1,\ldots,X_d \in  \mathrm{M_{n}}(\mathbb{C})$ and a subspace $V\subseteq \mathbb{C}^n$ (here we do not distinguish between the matrix and its column space) such that~$V$ is a joint invariant subspace for all $f_j(\underline{X})$, but is not an invariant subspace for $g(\underline{X})$. 

The following lemma shows that for fixed $n$ and $r$,   it is decidable  to determine whether there exist matrices $\underline{X}\in \mathrm{M_{n}}(\mathbb{C}) ^d$ and $V\in  \mathrm{M_{n\times r}}(\mathbb{C})$ that satisfy the conditions in Procedure~\ref{alg:g-notin-H}.
\begin{lemma}\label{lem:NonMemebr-term}
    For arbitrary fixed integers $n$ and $r$ with $1 \leq r\leq n$, it is \textbf{decidable} to determine the existence of the $n$-dimensional matrices  $\underline{X}=(X_1,\ldots,X_d)\in \mathrm{M_{n}}(\mathbb{C}) $ and an  $r$-dimensional subspace $V\subseteq  \mathbb{C}^n$ such that $V$ is invariant under $f_1(\underline{X}),\ldots,f_l(\underline{X})$, but not invariant under $g(\underline{X})$. 
\end{lemma}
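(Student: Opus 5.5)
The plan is to encode the existence question as a sentence in the first-order theory of real closed fields and then invoke Theorem~\ref{thm:Tarski–Seidenberg}. Identify $\mathbb{C}$ with $\mathbb{R}^2$. Each entry of $X_1,\ldots,X_d$ and of $V\in\mathrm{M}_{n\times r}(\mathbb{C})$ is written as $a+b\sqrt{-1}$ with real unknowns $a,b$, which gives $2dn^2+2nr$ real variables. I will assume, as the decidability setting requires, that the coefficients of $f_1,\ldots,f_l,g$ are algebraic, say Gaussian rationals. Then their real and imaginary parts are real algebraic numbers, and each can be introduced by an existentially quantified variable together with its defining polynomial equation and an interval condition. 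Because the coefficients are given explicitly, the entries of $f_j(\underline{X})$ and $g(\underline{X})$ are explicit polynomials with algebraic coefficients in the real variables. Their real and imaginary parts are therefore real polynomials.

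Next, each of the three conditions is turned into a quantifier-free or quantified real formula.
\begin{enumerate}
\item[1.] The condition $\operatorname{rank}(V)=r$ says that some $r\times r$ minor of $V$ is nonzero. This is a finite disjunction over choices of rows of the statement $\operatorname{Re}(\det)^2+\operatorname{Im}(\det)^2>0$, where each determinant is a polynomial in the entries.
\item[2.] The column space of $V$ is invariant under $f_j(\underline{X})$ if and only if there exists $C_j\in\mathrm{M}_r(\mathbb{C})$ with $f_j(\underline{X})V=VC_j$. I encode this by existentially quantifying $2r^2$ further real variables for each $j$ and imposing the real and imaginary parts of the entrywise equations. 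These are polynomial equalities.
\item[3.] Non-invariance under $g(\underline{X})$ is the negation: $\neg\exists C\in\mathrm{M}_r(\mathbb{C})$ with $g(\underline{X})V=VC$. Equivalently, $\forall C$, $\bigvee_{\text{entries}}(\text{entry of } g(\underline{X})V-VC\neq0)$.
\end{enumerate}
Alternatively, given that $V$ has full rank, condition~3 can be stated quantifier-free: the $n\times 2r$ matrix $[V\mid g(\underline{X})V]$ has rank $>r$, which means some $(r+1)\times(r+1)$ minor is nonzero. This version avoids the universal quantifier, although the theory allows it anyway.

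The full statement is then the single sentence
\[
\exists\,\underline{X},V,C_1,\ldots,C_l:\ \Phi_1\wedge\Phi_2\wedge\Phi_3 .
\]
Its length is bounded by a computable function of $n$, $r$, $d$, $l$ and the input polynomials, and it can be produced effectively from these data. The sentence is true over $\mathbb{R}$ exactly when the required complex matrices and subspace exist. By Theorem~\ref{thm:Tarski–Seidenberg} its truth value can be decided algorithmically, which proves the lemma.

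The main obstacle is mild: justifying that the translation is faithful and effective. Two points need care. First, the coefficients must lie in a field presentable inside the real-closed-field language, which requires the algebraic-coefficient assumption above. Second, the rank conditions must be expressed correctly. For this I would rely on the standard facts that rank is detected by minors and that $\mathbb{R}$ is a model of the theory of real closed fields.
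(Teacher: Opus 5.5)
Your proposal is correct and follows essentially the same route as the paper: you split every complex entry into real and imaginary parts to get a first-order sentence over $\mathbb{R}$, then invoke the Tarski--Seidenberg theorem. The differences are only in how the conditions are encoded: the paper uses $\det(V^*V)>0$ for full rank and vanishing of the $(r+1)$-minors of $[YV\mid V]$ for invariance, while you use $r\times r$ minors of $V$ and auxiliary matrices $C_j$ with $f_j(\underline{X})V=VC_j$; you also make explicit the requirement on the coefficient field, which the paper leaves implicit.
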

\begin{proof}
Define the following formulas:
\begin{itemize}
    \item $\operatorname{IsFullRank}(V)\coloneqq \operatorname{det}(V^* V)>0$, where $V^*$ is the  Hermitian adjoint of $V$. This formula is used to determine whether $V$  is a matrix with full column rank.

    \item $\operatorname{IsInvSpace}(Y,V)\coloneqq \operatorname{minor}_{r+1}([YV|V])=0$. By checking the $(r+1)$-minors of the horizontally concatenated matrix $[YV|V] $, one can determine 
     if $V$ is an invariant subspace of $Y$. 
\end{itemize}
Then determining the existence of $X_1,\ldots,X_d$ and $V$ is equivalent to determining whether the following quantified formula holds.
\begin{equation}\label{equ:quantified-formula}
\begin{split}
    &\left(\exists X_1,X_2,...,X_d \in  \mathrm{M_{n}}(\mathbb{C}),  \exists V\in  \mathrm{M_{n\times r}}(\mathbb{C}) \right) \left(\operatorname{IsFullRank}(V) \right.\\ &
      \land   \bigwedge_{j=1}^l \operatorname{IsInvSpace}(f_j(\underline{X}),V)  
      \land \left( \lnot \operatorname{IsInvSpace}(g(\underline{X}),V)\right) ),
    \end{split}
\end{equation}
note that since each $f_j(\underline{X})$ is a polynomial of $\underline{X}$, every entry of $f_j(\underline{X})$ is also a polynomial in the entries of $\underline{X}$. For complex variables $X_k$ and $V$,  one can introduce corresponding real variables $ W_k $ and $ Z_k$ such that $X_k = W_k + \mathrm{i} Z_k $ for all indices $k=1,2,\ldots,d$, and  introduce $W_V, Z_V$   such that $V=W_V+\mathrm{i}Z_V$. Therefore, \eqref{equ:quantified-formula} can be rewritten as a first-order logical formula over the field of real numbers. Here, we use the notation $\underline{W}+\mathrm{i}\underline{Z}$ to  denote the $d$-tuple $(W_1 + \mathrm{i}Z_1, W_2 + \mathrm{i}Z_2, \ldots, W_d + \mathrm{i}Z_d)$.
\begin{equation*}
\begin{split}
    &  \left(\exists W_1,\ldots,W_d,Z_1,\ldots,Z_d  \in  \mathrm{M_{n}}(\mathbb{R}),  \exists W_V,Z_V\in  \mathrm\red{{M_{n\times r}}}(\mathbb{R})\right) \\
    &
   ( \operatorname{IsFullRank}( W_V+\mathrm{i}Z_V)
     \land   \bigwedge_{j=1}^l \operatorname{IsInvSpace}(f_j(\underline{W}+\mathrm{i}\underline{Z}),W_V+\mathrm{i}Z_V)  
      \land \\ & \left.
      \left( \lnot \operatorname{IsInvSpace}(g(\underline{W}+\mathrm{i}\underline{Z}),W_V+\mathrm{i}Z_V)\right) \right).
    \end{split}
\end{equation*}
By separating the real and imaginary parts of each polynomial, the above formula can be formally encoded into the first-order theory of the field of the real numbers. Consequently, according to Theorem~\ref{thm:Tarski–Seidenberg}, it is decidable.
\end{proof}

 Now we are ready for the proof of Theorem~\ref{thm:general-case}. The proof relies on the fact that a problem is decidable if and only if it is both recursively enumerable and co-recursively enumerable (see, e.g., \cite[Theorem~9.4]{10.5555/1196416}).

\begin{proof}[proof of Theorem~\ref{thm:general-case}]
First, note that if  the two conditions in  Conjecture~\ref{conj:main} are equivalent, then for $f_1,\ldots,f_l,\, g \in \mathbb{K}\langle {\underline{x}}\rangle$, the following two statements are also equivalent:
\begin{enumerate}
\item[(i)] There exist finite numbers $r$ and $n$ with  $ 1 \leq r \leq n \in \mathbb{N}$ and matrices $X_1,\ldots,X_d \in \mathrm{M_n}(\mathbb{K})$, $V \in \mathrm{M_{n\times r}}(\mathbb{K})$, such that the column space of $V$ is a joint invariant subspace for
$f_1({\underline{X}}),\ldots,f_l({\underline{X}})$,  but not an invariant space for $g({\underline{X}})$;
\item[(ii)] $g$ does not belong to the unital $\mathbb{K}$-algebra generated by $f_1,\ldots,f_l$.
\end{enumerate}

Therefore, if the conditions in  Conjecture~\ref{conj:main} are equivalent, then  for any noncommutative polynomials $f_1,\ldots,f_l,\, g \in \mathbb{C}\langle {\underline{x}}\rangle$ with $g$ not belonging to the unital $\mathbb{C}$-algebra generated by $f_1,\ldots,f_l$,  there exist finite-dimensional matrices $X_1,\ldots,X_d$ and $V$ as a certification of the non-membership, which implies that  Procedure~\ref{alg:g-notin-H} terminates in finite time on input $(g,f_1,\ldots,f_l)$  if $g$ doesn't belong to the unital $\mathbb{C}$-algebra generated by $f_1,\ldots,f_l$.

     Combining  with Lemma~\ref{lem:Memebr-term}, it follows that for any input $g$ and $f_1,\cdots,f_l$ in $\mathbb{C}\left<\underline{x}\right>$,  either Procedure~\ref{alg:g-in-KF} or Procedure~\ref{alg:g-notin-H} is guaranteed to terminate and give the correct membership decision.  Hence, by running both procedures in parallel, we obtain an algorithm that always terminates and correctly decides  the subalgebra membership problem, which implies that the membership problem for finitely generated subalgebras of free associative algebras over $\mathbb{C}$  is decidable, which contradicts Theorem~\ref{thm:undecidable}.
\end{proof} 

\section{Further Discussion}
As demonstrated in this paper, the  Conjecture \ref{conj:main} does not hold for finite-dimensional matrices in the general case (non-homogeneous,$l\geq 2$). A natural generalization is to investigate whether the equivalence holds when evaluating polynomials on infinite-dimensional operators. We therefore propose the following conjecture.

\begin{conjecture}\label{conj:infinite-dim}
Let $f_1,\ldots,f_l,\, g \in \mathbb{C}\langle \underline{x}\rangle$. The following are equivalent:
\begin{enumerate}
\item[(i)] for every complex Hilbert space $\mathcal{H}$ and every tuple of bounded operators $\underline{A}=(A_1,\ldots,A_d) \in \mathcal{B}(\mathcal{H})^d$, every joint invariant subspace of $f_1(\underline{A}),\ldots,f_l(\underline{A})$ is invariant under $g(\underline{A})$;
\item[(ii)] $g$ belongs to the unital $\mathbb{C}$-algebra generated by $f_1,\ldots,f_l$.
\end{enumerate}
\end{conjecture}

We provide a preliminary discussion of this conjecture based on the Fock space. Consider the full Fock space $\mathcal{F}$, defined as the Hilbert space having the free monoid $\langle \underline{x} \rangle$ (the set of all words in variables $x_1, \ldots, x_d$) as an orthonormal basis, that is,

 \[\mathcal{F}:=\overline{\mathrm{span}\{~\omega \mid \omega\in \langle\underline{x}\rangle\}}.\]
 For each variable $x_k$, the left creation operators are defined by 
 \[L_{x_k} (\omega) :=x_k\omega.\]

By applying the arguments in Section \ref{sec2} to the general case (without the homogeneity assumption), condition (i) of Conjecture~\ref{conj:infinite-dim} implies the following conclusion:
 
 \[g(\underline{L})\mathbf{1}\in \overline{\mathrm{span}\{~u(F(\underline{L})\mathbf{1}\mid u\in \langle\underline{y}\rangle\}}.\]
  There still exists a gap between the above result and condition (ii) in Conjecture~\ref{conj:infinite-dim}, which is equivalent to the following condition:
 \[g(\underline{L})\mathbf{1}\in {\mathrm{span}}\{u(F(\underline{L}))\mathbf{1}\mid u\in \langle\underline{y}\rangle\}.\]

 Our disproof of the finite-dimensional case exploited the discrepancy between undecidability of the subalgebras membership problem 
and decidability of the first-order theory of a real
closed field.  In the infinite-dimensional setting of Conjecture~\ref{conj:infinite-dim}, this discrepancy vanishes. While the first-order theory of real closed fields is decidable, this property does not extend to operator algebras (see Fritz~\cite{MR4240748}). Consequently, the decidability obstruction is removed and resolving the conjecture in this setting requires a new approach.
 

\bibliographystyle{plain}
\bibliography{sample}
\end{document}